\newtheorem{theorem}{Theorem}[section]
\newtheorem*{theorem*}{Theorem}
\newtheorem{lemma}[theorem]{Lemma}
\theoremstyle{remark}
\newcommand{\ud}{\mathrm{d}}
\newcommand{\dx}{\mathrm{d}x}
\newcommand{\abs}[1]{\left|#1\right|}
\newcommand{\floor}[1]{\left\lfloor#1\right\rfloor}
\newcommand{\printlineno}{\hspace*{-.5in}\texttt{SOURCE LINE \#\the\inputlineno}}
\renewcommand{\printlineno}{}
\newcommand{\psage}{{PSAGE}}
\newcommand{\sage}{{Sage}}
\newcommand{\pari}{{PARI}}
\title{Conditionally bounding analytic ranks of elliptic curves}
\author{Jonathan W. Bober}
\address{Department of Mathematics, University of Washington, Seattle, WA, USA}
\email{jwbober@math.washington.edu}
\date{December 7, 2011}
\begin{document}

\begin{abstract}
We describe a method for bounding the rank of an elliptic curve under the assumptions of
the Birch and Swinnerton-Dyer conjecture and the generalized Riemann hypothesis. As an example, we
compute, under these conjectures, exact upper bounds for curves which are known to have
rank at least as large as $20, 21, 22, 23$, and $24$. For the known curve of rank at
least $28$, we get a bound of $30$.
\end{abstract}
\maketitle

\section{Introduction}

Determining the rank of an elliptic curve is a difficult problem, and there is currently
no known unconditional algorithm for determining the rank of a given curve. The basic
method for rigorously determining the rank of a curve is to find an upper bound for
the rank by computing the size of some Selmer groups and to find a lower bound for
the rank by finding enough independent rational points. In theory, if one continues
this process long enough, and the Shafarevich-Tate group of the curve is finite, the
upper and lower bounds should eventually coincide and the rank will be determined
exactly.

In practice, things are not so simple. Finding points on the curve is sometimes not too bad,
but the upper bounds for the rank are more problematic. Even the computation of the $2$-Selmer
rank is difficult, and it becomes prohibitively time consuming as the coefficients of
the elliptic curve grow; it is easy to write down a curve for which the state of the art
program for computing the $2$-Selmer group, John Cremona's mwrank \cite{mwrank},
will effectively take ``forever.''

If one is willing to accept the Birch and Swinnerton-Dyer conjecture that the rank of an
elliptic curve is the same as the order of vanishing of its $L$-function at the central
point, then it is possible to use the $L$-function to get information about the
rank. In fact, when the order of vanishing is between $0$ and $3$, it can be possible
to compute the $L$-function to enough precision and use some extra information about
the curve to determine the analytic rank exactly, as is done in \cite{rank3-bsd}, for example.
When the rank is larger than this, though, currently the best one can do is determine
that the first $r$ derivatives of the $L$-function are very close to $0$, and the
$(r+1)$-st is not, which will provide a very good guess for the rank and a rigorous upper
bound, assuming BSD.

This approach has its own problems, as it is much easier to write down a curve
of large conductor than it is to compute the $L$-function of such a curve.
For example, the known curve of rank at least $28$ \cite{rank28-curve}, which we will
write down later,
has conductor $N \approx 3.5 \times 10^{141}$,
and current methods (such as those described in \cite{rubinstein-methods-and-experiments})
typically
require summing on the order of $\sqrt{N}$ terms to compute the central value of the $L$-function. 
(It would take a compute about $10^{53}$ cpu-years just to add $1$ to itself $10^{70}$ times.)

We present here a third method which is rather effective at bounding the rank, especially
when the rank is large compared to the conductor, as long as one is willing
to assume both the Birch and Swinnerton-Dyer
conjecture and the Riemann Hypothesis for the $L$-function of the curve.
This method is not completely new. It is based on
Mestre's method \cite{mestre} for (conditionally) bounding the rank of an elliptic curve based
only on its conductor, and it was used by Fermigier \cite{fermigier} to study ranks of
elliptic curves in certain families. However, it does not seem to have gained much traction
and does not seem to have been used much, if at all, since.

The idea, in brief, is as follows. Take $f(x)$ to be a function such that
$f(0) = 1$ and $f(x) \ge 0$ for all real $x$. Then, assuming the Riemann hypothesis,
the sum $\sum f(\gamma)$, where $1/2 + i\gamma$ runs over the nontrivial zeros of $L(s, E)$,
will be an upper bound for the analytic rank of $E$. Moreover,
for certain choices of $f(x)$ this sum may be efficiently evaluated using the
explicit formula for the $L$-function attached to $E$.


This method has recently been implemented by the author, and is available as part
of William Stein's \psage{ }\cite{psage} add-ons to \sage{ }\cite{sage-4.7.2}. As an example,
of what it can do, we will examine $6$ curves that are known to have rather large rank. We denote
these curves as $E_n$, $n = 20, 21, 22, 23, 24, 28$, where $n$ is a known lower bound for
the rank. We will write down these curves later (they are all taken from A. Dujella's
website \cite{rank-records}, and
at the time of discovery each held the record for the curve with largest number of known
independent rational points). The exact rank is not known for any of these curves. However,
conditionally we may claim

\begin{theorem}\label{main-theorem}
Assuming BSD and GRH, $E_n$ has rank exactly $n$ for $n = 20, 21, 22, 23,$ and $24$, while
$E_{28}$ has rank $28$ or $30$.
\end{theorem}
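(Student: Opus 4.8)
The plan is to combine the positivity argument sketched in the introduction with an explicit, rigorous numerical computation. Fix an even test function $f$ with $f(0)=1$ and $f(x)\ge 0$ for all real $x$, chosen moreover so that its Fourier transform $\widehat f$ has compact support. The Weil explicit formula for $L(s,E)$ then rewrites $\sum_\gamma f(\gamma)$ --- the sum over the ordinates $\gamma$ of the nontrivial zeros $\tfrac12+i\gamma$ of $L(s,E)$ --- as the sum of a term proportional to $\widehat f(0)\log N$, an archimedean term built from the digamma function, and a \emph{finite} sum over prime powers involving the coefficients $a_p$ of $E$ (the range of that sum being controlled by the support of $\widehat f$). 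Assuming the Riemann hypothesis for $L(s,E)$, every $\gamma$ is real, so every $f(\gamma)\ge 0$; since $f(0)=1$, the zeros at the central point contribute exactly $\operatorname{ord}_{s=1/2}L(s,E)$, and hence
\[
\operatorname{ord}_{s=1/2}L(s,E)\ \le\ \sum_\gamma f(\gamma)
\ =\ (\text{conductor term})+(\text{archimedean term})-(\text{prime sum}).
\]
Assuming the Birch and Swinnerton-Dyer conjecture, the left-hand side equals $\operatorname{rank}(E)$.

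The second step is to evaluate the right-hand side rigorously for each $E_n$. The conductor and archimedean terms are explicit functions of $N$ and can be bounded above by interval arithmetic. The prime sum is finite, so it requires only the values $a_p$ for $p$ below an explicit bound; propagating the rounding errors through the whole computation yields a strict numerical upper bound $B_n$ for $\sum_\gamma f(\gamma)$, and hence for $\operatorname{rank}(E_n)$. One is free to optimize the shape of $f$ (and the width of the support of $\widehat f$) so as to make $B_n$ as small as possible; since the leading term is of size comparable to $\widehat f(0)\log N$, the bound is sharpest precisely when the rank is large relative to $\log N$.

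It then remains to read off the conclusion. Each $E_n$ is known to have rank at least $n$ by way of the explicit independent points recorded on Dujella's site (one checks that the height pairing matrix on those points is nonsingular). For $n\in\{20,21,22,23,24\}$ the computation gives $B_n<n+1$, so $\operatorname{rank}(E_n)$ is an integer in $[\,n,B_n\,]$ and therefore equals $n$. For $E_{28}$ the sign of the functional equation is $+1$, so $\operatorname{ord}_{s=1/2}L(s,E_{28})$ --- and hence, under BSD, the rank --- is even; combined with the lower bound $28$ and a computed bound $B_{28}<31$, this forces the rank of $E_{28}$ to be $28$ or $30$.

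The one genuinely delicate point is the last inequality of step two: getting $B_n$ to fall below the required integer. The contribution $\propto\widehat f(0)\log N$ cannot be removed, so one is pushed to take the support of $\widehat f$ wide --- which is cheap, since the $a_p$ are easy to compute --- after which the entire gain must come from the negative prime sum, which for these particular curves is sizeable because their $a_p$ run negative on average. Choosing a good family of test functions $f$ and, in the tight cases (most acutely $E_{28}$, whose conductor is about $3.5\times 10^{141}$), squeezing $B_n$ just under $n+1$ is where essentially all of the work lies; the explicit formula and the positivity argument themselves are entirely standard.
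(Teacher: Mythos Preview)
Your overall plan coincides with the paper's: take $f(z;\Delta)=\bigl(\sin(\Delta\pi z)/(\Delta\pi z)\bigr)^2$ so that $\widehat f$ is the triangle function supported on $[-\Delta,\Delta]$, apply the explicit formula, use GRH for the positivity bound $\operatorname{ord}_{s=1/2}L(s,E)\le\sum_\gamma f(\gamma;\Delta)$, use BSD to identify this with the rank, and combine with the $n$ known independent points for the lower bound.

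There is one point that needs correction. You assert that for $n\in\{20,21,22,23,24\}$ the computation gives $B_n<n+1$, so that parity is not needed. That is not what the computation actually produces. With the values of $\Delta$ used in the paper (Table~\ref{table1}) the sums all land strictly between $n+1$ and $n+2$: for instance $\sum_\gamma f(\gamma;2.0)\approx 21.70$ for $E_{20}$, $\approx 23.71$ for $E_{22}$, and $\sum_\gamma f(\gamma;2.5)\approx 25.57$ for $E_{24}$. Likewise for $E_{28}$ the sum is about $31.30$, not below $31$. Thus the floor of $B_n$ is $n+1$ in every case, and the root-number argument you reserve for $E_{28}$ is in fact invoked for \emph{all six} curves: one checks that the global root number agrees with the parity of $n$ in each case, so the integer bound $n+1$ drops to $n$ (and $31$ drops to $30$ for $E_{28}$). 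Getting $B_n<n+1$ directly by pushing $\Delta$ higher is possible in principle, since $\sum_\gamma f(\gamma;\Delta)\to\operatorname{ord}_{s=1/2}L(s,E)$ as $\Delta\to\infty$, but the prime sum has length $\exp(2\pi\Delta)$, so it is far cheaper to compute the root number and accept a gap of two.
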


\subsection{Acknowledgements} 
Most of the computations in this paper run in a short amount of time,
and were done on the author's personal computer. Some longer computations
were run on the sage cluster at the University of Washington, supported
by NSF grant DMS-0821725, and the riemann cluster at the University
of Waterloo, funded by the Canada Foundation for Innovation, the Ontario
Innovation Trust, and SGI.

The source code for our implementation is available as part of PSAGE \cite{psage}.
It uses Sage \cite{sage-4.7.2}, and hence PARI \cite{pari-2.4.3}, to compute $a_p$ for
bad primes, and uses Andrew Sutherland's smalljac \cite{smalljac} to compute
all other values of $a_p$.

Parts of this work began while the author was in residence at the
Mathematical Sciences Research Institute during the Arithmetic
Statistics program, Spring 2011, during which time the author was
partially supported by NSF grant DMS-0441170, administered by MSRI.
Discussions during the informal ``explicit formula seminar,'' especially
with David Farmer and Michael Rubinstein, were influential in encouraging 
this work.

Currently the author is supported by NSF grant DMS-0757627, administered
by the American Institute of Mathematics.

\section{Bounding ranks}
\subsection{The method}
Let
\[
    L(s, E) = \sum_{n=1}^\infty \frac{a_n}{n^s} = \prod_p L_p(s, E)^{-1}
\]
be the $L$-function of an elliptic curve, normalized so that the completed $L$-function
$\Lambda(s, E) = \epsilon \Lambda(1 - s, E)$, and let $c_n$ be defined by
\[
    -\frac{L'(s, E)}{L(s, E)} = \sum_{n=1}^\infty \frac{c_n}{n^s}.
\]
More explicitly, if we define $\alpha(p)$ and $\beta(p)$ by
\[
    L_p(s, E) = (1 - \alpha(p)p^{-s})(1 - \beta(p)p^{-s}),
\]
(note that $\alpha$ and $\beta$ are only well defined up to permutation, and that at least one
of them will be $0$ when $p$ is a prime of bad reduction), then
\begin{equation}
    c_{p^m} = \big(\alpha(p)^m + \beta(p)^m\big)\log p,
\end{equation}
and $c_n = 0$ when $n$ is not a prime power.

Our main tool will be the explicit formula for $L(s, E)$, which we state in a
friendly form in the following lemma.

\begin{lemma}
Suppose that $f(z)$ is an entire function with $f(x + iy) \ll x^{-(1 + \delta)}$ for $\abs{y} < 1 + \epsilon$,
for some $\epsilon > 0$, and that the Fourier transform of $f$
\[
    \hat f(y) = \int_{-\infty}^\infty f(x)e^{-2 \pi i x y} \dx
\]
exists and is such that
\[
    \sum_{n=1}^\infty \frac{c_n}{n^{1/2}} \hat f\left(\frac{\log n}{2\pi}\right) 
\]
converges absolutely. Then
\begin{multline}\label{eq-xxx-general}
    \sum_{\gamma} f(\gamma) = \hat f(0) \frac{\log N}{2\pi} - \hat f(0) \frac{\log 2\pi}{\pi}
         + \frac{1}{\pi} \Re \left\{\int_{-\infty}^\infty \frac{\Gamma'}{\Gamma}(1 + it)f(t) \ud t\right\} \\
         - \frac{1}{2\pi}\sum_{n = 1}^\infty \frac{c(n)}{n^{1/2}}\left(\hat f\left(\frac{\log n}{2\pi}\right)
                + \hat f\left(-\frac{\log n}{2\pi}\right) \right),
\end{multline}
where $1/2 + i\gamma$ runs over the nontrivial zeros of $L(s, E)$, where $E$ is an elliptic
curve with conductor $N$.
\end{lemma}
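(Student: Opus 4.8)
The plan is to run the classical contour-integration derivation of the Weil explicit formula for the completed $L$-function. Write $\Lambda(s,E)=N^{s/2}(2\pi)^{-s}\Gamma(s+\tfrac12)L(s,E)$; modularity supplies the analytic continuation of $L(s,E)$ to an entire function, the fact that $\Lambda$ is entire of order one, and the functional equation $\Lambda(s,E)=\epsilon\,\Lambda(1-s,E)$. Taking logarithmic derivatives,
\[
  \frac{\Lambda'}{\Lambda}(s)=\frac{\log N}{2}-\log 2\pi+\Digamma\!\left(s+\tfrac12\right)+\frac{L'}{L}(s),
\]
and the three summands on the right will become the three groups of terms in \eqref{eq-xxx-general} once we use $\tfrac{L'}{L}(s)=-\sum_n c_n n^{-s}$ for $\Re s>1$.

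I would set $h(s)=f\!\left({-}i(s-\tfrac12)\right)$, which is entire and satisfies $\sum_\gamma f(\gamma)=\sum_\rho h(\rho)$, the sum over zeros $\rho$ of $\Lambda(\cdot,E)$; all of these lie in the open strip $0<\Re s<1$, since the Euler product gives $L(s,E)\ne0$ for $\Re s\ge1$ and the functional equation propagates nonvanishing to $\Re s\le0$. Applying the residue theorem to $\tfrac{\Lambda'}{\Lambda}(s)h(s)$ on the rectangle with vertical sides $\Re s=1+\eta$ and $\Re s=-\eta$ (for a fixed $\eta\in(0,\tfrac12)$, so that the poles of $\Gamma(s+\tfrac12)$ and the trivial zeros of $L$ stay outside) and horizontal sides at heights $\pm T$, and letting $T\to\infty$,
\[
  \sum_\rho h(\rho)=\frac{1}{2\pi i}\left(\int_{(1+\eta)}-\int_{(-\eta)}\right)\frac{\Lambda'}{\Lambda}(s)\,h(s)\,\ud s .
\]
On the left line I apply the functional equation in the form $\tfrac{\Lambda'}{\Lambda}(s)=-\tfrac{\Lambda'}{\Lambda}(1-s)$ and substitute $s\mapsto1-s$, which folds the two integrals into $\frac{1}{2\pi i}\int_{(1+\eta)}\tfrac{\Lambda'}{\Lambda}(s)\big(h(s)+h(1-s)\big)\,\ud s$.

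It then remains to insert the three-term formula for $\tfrac{\Lambda'}{\Lambda}$ and evaluate each piece. For $\tfrac12\log N-\log2\pi$, shift the contour to $\Re s=\tfrac12$ (permissible since $h$ is entire and decaying) and use $\int_{(1/2)}h(s)\,\ud s=\int_{(1/2)}h(1-s)\,\ud s=i\hat f(0)$; this yields $\hat f(0)\big(\tfrac{\log N}{2\pi}-\tfrac{\log2\pi}{\pi}\big)$. For $\Digamma(s+\tfrac12)$, again move to $\Re s=\tfrac12$ (its poles at $s=-\tfrac12,-\tfrac32,\dots$ are not crossed), producing $\frac{1}{2\pi}\int_{-\infty}^\infty\Digamma(1+it)\big(f(t)+f(-t)\big)\,\ud t$, which equals $\frac1\pi\Re\!\left\{\int\Digamma(1+it)f(t)\,\ud t\right\}$ after $t\mapsto-t$ and Schwarz reflection (so that $\Digamma(1-it)=\overline{\Digamma(1+it)}$ on $\R$). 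For $-\sum_n c_n n^{-s}$, interchange sum and integral (justified by $\abs{c_n}\le 2n^{1/2}\log n$ and the decay of $f$), and in each resulting integral $\int_{(1+\eta)}n^{-s}h(s)\,\ud s$ shift the $t$-contour of the Fourier integral into the strip $\abs{\Im z}<1+\epsilon$ where $f$ decays; the shift multiplies $n^{-it}$ by $n^{1/2+\eta}$ and leaves $\frac{1}{2\pi}n^{-1/2}\hat f\big(\tfrac{\log n}{2\pi}\big)$, while $h(1-s)$ contributes $\frac{1}{2\pi}n^{-1/2}\hat f\big({-}\tfrac{\log n}{2\pi}\big)$. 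Collecting the three contributions gives \eqref{eq-xxx-general}, and the hypothesis that $\sum_n c_n n^{-1/2}\hat f(\log n/2\pi)$ converges absolutely makes the final series legitimate.

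The main obstacle is the one genuinely analytic point: showing the integrals over the horizontal segments $\Im s=\pm T$ vanish as $T\to\infty$. For this one chooses the heights $T$ (as one always may, within any unit interval) to stay a distance $\gg1/\log T$ from every ordinate of a zero, and then combines the standard bound $\tfrac{L'}{L}(\sigma+iT)\ll\log^2(\abs T+2)$ for $-\eta\le\sigma\le1+\eta$ --- which follows from the partial-fraction expansion of $\tfrac{\Lambda'}{\Lambda}$ together with the zero-counting estimate $\sum_\rho\big(1+(T-\gamma)^2\big)^{-1}\ll\log(\abs T+2)$, and from Stirling for the $\Gamma$-factor --- with the hypothesis $f(x+iy)\ll x^{-(1+\delta)}$ on $\abs y<1+\epsilon$, which forces $h(\sigma\pm iT)\ll T^{-(1+\delta)}$ uniformly on the segments. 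The product integrates to $\ll T^{-\delta}\log^2T\to0$. All remaining steps --- the two contour shifts, the interchange of summation and integration, and the Fourier-transform manipulations --- are controlled by these same decay hypotheses and the Ramanujan bound $\abs{a_p}\le2\sqrt p$.
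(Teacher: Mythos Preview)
Your proof is correct and follows essentially the same approach as the paper's: integrate the logarithmic derivative against $h(s)=f(-i(s-\tfrac12))$ around the critical strip, pick up the zeros by the residue theorem, fold the left contour onto the right via the functional equation, and then separate the conductor, $\Gamma$-factor, and Dirichlet-series contributions. The paper's proof is in fact only a brief sketch that cites \cite[Theorem 5.12]{iwaniec-kowalski}; you have filled in considerably more detail (the handling of the horizontal segments, the explicit Fourier computations, the justification of the contour shifts), but the route is the same.
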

\begin{proof}
A proof of the explicit formula in this form, or in a similar form, can be found in various
sources, e.g. \cite[Theorem 5.12]{iwaniec-kowalski}, so we give only a brief sketch. The
idea is to integrate the function
\[
    F(s) \frac{L'(s, E)}{L(s, E)},
\]
where $F(1/2 + is) = f(s)$, on a vertical line to the right of the critical strip and, in the
reverse direction, on a vertical line to the left of the critical strip. By the residue
theorem, this integral will be equal to $2\pi \sum_\gamma f(\gamma)$. One now applies the
functional equation to write the integral in the left half-plane as an integral in the right
half-plane.

The sum over the Fourier coefficients of $f$ arises from shifting contours to the region
of absolute convergence and using the Dirichlet series for $L'(s)/L(s)$, while the other
terms arise from shifting the remaining integrals to the line $\Re(s) = 1/2$.

The conditions on $f(z)$ are exactly those needed to make sure that this process can go
through without trouble. Of course, it is also important that $L(s, E)$ is entire
and that it satisfies a functional equation \cite{W, TW, BCDT}.
\end{proof}

A convenient function to use in an application of the explicit formula is
\[
    f(z) = f(z; \Delta) = \left(\frac{\sin(\Delta \pi z)}{\Delta \pi z}\right)^2,
\]
which has the simple Fourier transform
\[
    \hat f(x; \Delta) = \left(\frac{1}{\Delta}\right)\left(1 - \abs{\frac{x}{\Delta}}\right), \abs{x} < \Delta.
\]
With this choice of $f$, equation \eqref{eq-xxx-general} takes the form
\begin{multline}\label{eq-xxx-specific}
    \sum_{\gamma} f(\gamma; \Delta) = \frac{\log N}{\Delta 2\pi} - \frac{\log 2\pi}{\Delta \pi}
         + \frac{1}{\pi} \Re\left\{\int_{-\infty}^\infty \frac{\Gamma'}{\Gamma}(1 + it)f(t; \Delta)\ud t\right\} \\
         - \frac{1}{\Delta \pi} \sum_{p \le \exp(2\pi\Delta)} \log p \sum_{k=1}^{\floor{2\pi\Delta/\log p}} \frac{k}{p^{k/2}}\big(\alpha(p)^k + \beta(p)^k\big)\left(1 - \frac{k\log p}{2\pi\Delta}\right).
\end{multline}

Since $f(\gamma; \Delta) \ge 0$ as long as $\gamma$ is real, and $f(0; \Delta) = 1$,
equation \eqref{eq-xxx-specific} will give an upper bound for the order
of vanishing of $L(s, E)$ at $s = 1/2$, as long as the Riemann Hypothesis holds
for $L(s, E)$. And if
$\Delta$ is not too large, we can quickly evaluate the right hand side of equation
\eqref{eq-xxx-specific} to calculate this upper bound. It is also worth
noting that, assuming RH,
\begin{multline*}
    -\lim_{\Delta \rightarrow \infty} \frac{1}{\Delta \pi} \sum_{p \le \exp(2\pi\Delta)} \log p \sum_{k=1}^{\floor{2\pi\Delta/\log p}} \frac{k}{p^{k/2}}\big(\alpha(p)^k + \beta(p)^k\big)\left(1 - \frac{k\log p}{2\pi\Delta}\right) \\
     = \mathrm{ord}_{s = 1/2} L(s, E)
\end{multline*}
so that, in principle, we should be able to get as good a bound for the rank as we like
through this method. However, as the length of the prime sum grows exponentially in $\Delta$,
this method quickly becomes infeasible once $\Delta$ gets a little large than $4$.

\subsection{Some curves}
As an example, we examine $6$ elliptic curves from Dujella's online tables. They are
\begin{multline*}
    E_{20}: y^2 + xy = x^3 - 431092980766333677958362095891166x \\
        + 5156283555366643659035652799871176909391533088196,
\end{multline*}
\begin{multline*}
E_{21}: y^2 + xy + y = x^3 + x^2 - 215843772422443922015169952702159835x \\
              - 19474361277787151947255961435459054151501792241320535,
\end{multline*}
\begin{multline*}
E_{22}: y^2 + xy + y = x^3 - 940299517776391362903023121165864x \\
              + 10707363070719743033425295515449274534651125011362,
\end{multline*}
\begin{multline*}
E_{23}: y^2 + xy + y = x^3 - 19252966408674012828065964616418441723x  \\
              + 32685500727716376257923347071452044295907443056345614006,
\end{multline*}
\begin{multline*}
E_{24}: y^2 + xy + y = x^3 - 120039822036992245303534619191166796374x \\
              + 504224992484910670010801799168082726759443756222911415116,
\end{multline*}
and
\begin{multline*}
E_{28}: y^2 + xy + y = x^3 - x^2 - 
    {20067762415575526585033208 \times 10^{30} \choose +\ 209338542750930230312178956502}x \\
    + {3448161179503055646703298569039072037485594 \times 10^{40} \choose
       +\ 4359319180361266008296291939448732243429}.
\end{multline*}

Each $E_n$ has $n$ known independent rational points of infinite order, so thus has at least
rank $n$.
(See \cite{rank20-curve, rank21-curve, rank22-curve, rank23-curve, rank24-curve, rank28-curve},
or \cite{rank-records} for quick reference.)
Using the methods described above, we compute rank
bounds for each of these curves. These are listed in Table \ref{table1}. The global root number
can be computed for each curve. (In \sage, {\tt E.root\_number()}, which
uses \pari{ }\cite{pari-2.4.3}, will finish quickly for
$E_{20}$, $E_{21}$, and $E_{22}$ and within a few hours for $E_{23}$ and $E_{24}$. For $E_{28}$
it is best to see the mailing list discussion which gives the factorization of the
discriminant \cite{rank28-discussion}.) In each case the root number agrees with the parity
of the known number of independent points, so to get a tight upper bound for the rank
we only need to get within $2$ of the number of known independent points, and so
the computation in Table \ref{table1} gives the proof of Theorem \ref{main-theorem}.


\begin{table}
\begin{tabular}{c|c|c|c|c}
Curve & $\log N_E$ &  $\Delta$ & $\sum_{\gamma} f(\gamma; \Delta)$ & $\frac{\log N_E}{2\pi\Delta}$\\\hline
$E_{20}$ & $170.09$ & $2.0$ & $21.70$ & $13.54$ \\
$E_{21}$ & $196.68$ & $2.5$ & $22.68$ & $12.52$ \\
$E_{22}$ & $182.72$ & $2.0$ & $23.71$ & $14.54$ \\
$E_{23}$ & $205.06$ & $2.5$ & $24.49$ & $13.05$ \\
$E_{24}$ & $219.93$ & $2.5$ & $25.57$ & $14.00$ \\
$E_{28}$ & $325.90$ & $3.2$ & $31.30$ & $16.21$ 
\end{tabular}

\vspace{.1in}

\caption{Computed upper bounds for the ranks of some curves, along with a heuristic
guess of what these bounds should for a typical elliptic curve. The sum over the zeros here is
rounded up; other numbers are rounded to nearest.} \label{table1}
\end{table}

\subsection{Curves of small conductor}\label{section-small-conductor}
For further testing, this method was also run
on all elliptic curves up with conductor below $180000$
(from Cremona's tables \cite{cremona-online})
using $\Delta = 2.0$, a computation
which ran in under a day on a fast $8$ core computer. In this range there are $790677$
isogeny classes of elliptic curves, and for all but $9882$ isogeny classes it
turns out that $\floor{\sum_{\gamma} f(\gamma; 2.0)} = \mathrm{rank}(E)$; in the remaining
cases, $\floor{\sum_{\gamma} f(\gamma; 2.0)} = \mathrm{rank}(E) + 1$, so consideration
of the root number of the curve gives the exact rank. 

\section{Further comments}

\subsection{Some evidence towards BSD}
There is a way in which these computations can be seen as giving mild evidence in support of the
Birch and Swinnerton-Dyer conjecture. The upper bound computed for a curve $E$ is the 
value of the sum $\sum_\gamma f(\gamma; \Delta)$, and as $f(\gamma; \Delta)$
decays fairly rapidly as $\gamma$ grows, one does not expect this sum to be very large
for a typical elliptic curve.

To obtain a crude approximation to what we might expect the value of this sum to be, consider
that the local zero density of a typical $L(s, E)$ near the central point is approximately
$\frac{2\pi}{\log N_E}$. Then, if the zeros are spaced uniformly at random (an assumption
that is not really correct, but is close enough to true for our crude purposes), we might expect
that
\[
    \sum_{\gamma} f(\gamma, \Delta) \approx \frac{\log N_E}{2\pi} \int_{-\infty}^\infty f(t; \Delta) \ud t
         = \frac{\log N_E}{2\pi\Delta},
\]
possibly with a small adjustment to take into account the parity of the rank. (More precisely,
we might expect that if we average this sum over all elliptic curves of conductor close to $N_E$,
the answer will not be too far from this integral.) Thus, when this sum is significantly larger
than this estimate, it indicates an extreme concentration of zeros near the central point. (It
is also possible to arrive at more refined version of this heuristic by considering the explicit
formula. In such a case, it is necessary to assume that the family of elliptic curves considered
is large enough that $a_p(E)$ averages to zero for each $p$, and we notice that the integral
of the $\Gamma$-factor plays a small role as well.)

As some further small evidence for this heuristic, we note that the average of
\[
    \frac{4 \pi}{\log N} \sum_{\gamma} f(\gamma; 2.0)
\]
over all isogeny classes up to $180000$ is approximately $.9638$. The small difference from
$1$ should be accounted for by the $\Gamma$-factor, which tends to push zeros away from the
central point.

It should also be possible to refine this heuristic somewhat to make a guess as to what
the sum should be for a high rank curve by making the assumption that a zero of high
order at the central point will push other zeros away.


\subsection{Correctness tests}

The method described here is simple enough that it is easy to implement, which reduces the
likeliness of bugs. It is still important to test it where possible, however, in order to
have more confidence in its correctness.

As described in Section \ref{section-small-conductor}, this code was run on every isogeny
class up to conductor $180000$, and
the results there suggest a high degree of confidence in the results elsewhere. As a further test,
one can also compute many zeros for the $L$-function of an elliptic curve of small conductor,
compute the sum over zeros directly, and verify that it agrees with our explicit formula
implementation. This was done with the elliptic curve ``11a1'' for a few values of $\Delta$,
and little over $200000$ zeros (computed using M. Rubinstein's lcalc package \cite{lcalc}),
and the agreement is generally to within about $10^{-6}$, which
is in line with what is expected using only $200000$ zeros, and which is roughly the precision
to which the integral in the explicit formula was calculated. Similar tests have also been done
with a smaller number of zeros for other $L$-functions.
\bibliography{/share/math/papers/bib}{}
\bibliographystyle{amsplain}

\end{document}